\theoremstyle{note}
\newtheorem{thm}{Theorem}
\newtheorem{lem}[thm]{Lemma}
\newtheorem{defn}{Definition}
\theoremstyle{remark}
\newtheorem*{rem}{Remark}
\begin{document}

\title{Can one see the shape of a network?}
\author{Melanie Weber$^{1,2}$, Emil Saucan$^{1,3}$and J{\"u}rgen Jost $^{1,4}$\\
\footnotesize $^{1}$ Max Planck Institute for Mathematics in the Sciences; Inselstrasse 22, 04103 Leipzig, Germany \\
\footnotesize $^{2}$ University of Leipzig; Augustusplatz 10, 04109 Leipzig, Germany \\
\footnotesize $^{3}$ Technion -- Israel Institute of Technology; Haifa 32000, Israel\\
\footnotesize $^{4}$ Santa Fe Institute; 1399 Hyde Park Road Santa Fe, New Mexico 87501 USA\\
{\footnotesize (Contact: mw25@math.princeton.edu, semil@ee.technion.ac.il, jost@mis.mpg.de)}
}

\maketitle

\begin{abstract}
\noindent Traditionally, network analysis is based on local properties of vertices, like their degree or clustering, and their statistical behavior across the network in question. This paper develops an approach which is different in two respects. We investigate edge-based properties, and we define global characteristics of networks directly. The latter will provide our affirmative answer to the question raised in the title. 
More concretely, we start with Forman's notion of the Ricci curvature of a graph, or more generally, a polyhedral complex. This will allow us to pass from a graph as representing a network to a polyhedral complex for instance by filling in triangles into connected triples of edges and to investigate the resulting effect on the curvature. This is insightful for two reasons: First, we can define a curvature flow in order to asymptotically simplify a network and reduce it to its essentials. Second, using a construction of Bloch, which yields a discrete Gauss-Bonnet  theorem, we have the Euler characteristic of a network as a global characteristic. These two aspects beautifully merge in the sense that the asymptotic properties of the curvature flow are indicated by that Euler characteristic.
\end{abstract}

\section{Introduction}
\noindent The field of Network Science studies a wide range of systems and structures represented as graphs. Numerous methods have been introduced to study their local structure: From clustering coefficients and community detection methods to assortativity and mixing patterns resulting in a variety of network-analytic tools to analyze the local structure of distinguished regions \cite{ab_review,newman2}. However, what can we say about the global structure? If we could take a bird’s eye view we might ask - in paraphrase of \cite{Ka}: \textit{Can we see the shape of a network?}

In \cite{SMJSS} part of the authors introduced Forman-Ricci curvature,  as an edge-based characteristic for networks as classic graphs in one dimension. In \cite{WJS2} the authors extended this study, and investigated its associated geometric flows. The present article extends the formalism therein to higher-order structures by building on previous work of R. Forman \cite{Fo} and E. Bloch \cite{Bloch} on $n$-dimensional cell complexes. In \cite{Fo}, Forman introduced a discretization of Ricci-curvature and its associated flows for CW complexes. The polyhedral complexes considered here, are special cases to which his more general theory applies. While giving a mathematically rigorous formulation that allows for efficient computation, Forman's work unfortunately fails to map essential results from differential geometry, most importantly the \textit{Gau{\ss}-Bonnet Theorem}, to the discrete case. However, in a recent article E. Bloch \cite{Bloch} develops a discrete Gau{\ss}-Bonnet style theorem that can be applied to networks and their extensions to higher dimensional complexes. 

In the present work, we extend the classic graphs by including higher degree faces and extrapolating network graphs built from empirical data to higher dimensional polyhedral complexes. For the special case of unweighted networks, we present a simple formula for the influence of higher degree faces on an edge’s Ricci curvature.
The higher dimensional substructures or \textit{faces} considered here are of great importance for the analysis of real-world networks. They represent strongly associated sets of nodes that are either pairwise connected (\textit{2-faces}) or form a densely interconnected local cluster (\textit{n-faces}). In terms of applications to complex systems, they are especially important in quantitative biology, where correlation networks are widely used -- for instance, co-expression networks in Genomics or brain networks in Neuroscience. In these fields, the investigation of such local substructures is a relevant part of network analysis.

The second part of the article suggests theoretical tools to classify the shape of a given network. We introduce a network-theoretic formulation of the Gau{\ss}-Bonnet theorem for 2-dimensional complexes that allows for computation of the Euler characteristic based on Ricci curvature. This enables us to quantify the shape of a network by computing its distribution of Forman-Ricci curvature. Furthermore, in analogy to the model 
geometries arising in the classical (surface) Ricci flow, we attempt to define {\it prototype} networks, introducing both a classification scheme for network shapes and a tool to study qualitatively the long term behavior and possible limit cases of dynamically evolving networks. With this, we introduce a theoretical foundation for the prediction of future network states and eventually the long term study of dynamic effects in complex systems.


\section{Higher dimensional faces in networks}

\noindent In the first section we extend the widely used concept of the network graph, commonly perceived as a {\em regular, 1-dimensional cellular complex}, by including higher dimensional faces. The resulting 2-dimensional complexes are more complex objects than their classic 1-dimensional counterpart but can be described with similar formalisms. We consider them here as \textit{polyhedral complexes} instead of the more general cellular complexes. This choice imposes some restrictions on the connectivity and degenerated substructures (e.g. loops, multiple edges, isolated edges). Here, we neglect these degenerated cases and assume a connected graph. In practical examples and computational investigations, we consider the largest connected components, if facing the later issue.

Recall the definition of a polyhedral complex \cite{gruenbaum}:
\begin{defn}[polyhedral complex, 2D]
A 2-dimensional polyhedral complex X is a triplet (V, E, F) with $V \neq \varnothing$ a set of nodes (or vertices), $E$ a set of edges  and $F$ a set of faces, such that
\begin{enumerate}
\item each $e \in E$ is incident to two nodes $v_i , v_j \in V$,
\item each $f \in F$ is a polygon with nodes $v_i \in V$ and edges $e_j \in E$ 
\end{enumerate}
\end{defn}
\noindent When confronted with a network, as represented by a graph, we may pass to a higher-dimensional polyhedral complex by inserting polyhedra into certain graph motives, in order to express suitable relations between the vertices involved in a geometrical manner. For instance, whenever we find a cycle of some short length $\ell \le L$, we may insert a polygon into those $\ell$ edges. This would express the fact that such short cycles correspond to groups of vertices that are in some sense closely related to each other. Likewise, whenever we find a complete subgraph $K_p$ for some small $p\le P$, we can insert a $(p-1)$-dimensional simplex. As we shall see, such insertions may have the effect of decreasing the curvature. In some sense, this represents a more refined geometric representation of the close relationships between the vertices involved than simply contracting the subgraph in question to a single vertex. 
In practice this amounts to adding only those simplices that model correlations of higher order (edges encoding correlations between couples of nodes), thus triangles correspond to correlations between 3 nodes (e.g. genes in a genetic regulatory network), 3-dimensional simplices to correlations between 4 nodes, etc. (See the detailed discussion and Tables 1 and 2 in Section 2.) This allows us not only to obtained a more refined analysis of the geometry of underlying network via that of the associated complex, but also -- and perhaps more importantly -- it allows us to, by incorporating in a meaningful manner the higher order correlations, to better model real life networks.


We refer to the resulting $p$-dimensional cells as $p$-faces. Moreover, the resulting $p$-faces are called \textit{parents} of the $(p-1)$-faces (notation: $f^p > f^{p-1}$) and, in turn, the $(p-1)$-faces are the \textit{children} of the $p$-faces (notation: $f^{p-1} < f^{p}$). We introduce a notation for the set of $p$-faces of a graph G as
\begin{eqnarray}
F_p (G) = \lbrace	f_i^p	\rbrace_{i} \; .
\end{eqnarray}
With this, recall the definition of \textit{parallel} faces:
\begin{defn}[parallel faces]
Let $f^p, \hat{f}^p \in F_p(G)$. Then $f^p$ and $\hat{f}^p$ are {\rm parallel} (notation: $f^p \parallel \hat{f}^p$) iff
\begin{enumerate}
\item $\exists f^{p+1}: \; f^p, \hat{f}^p < f^{p+1}$ or
\item $\exists f^{p-1}: \; f^p, \hat{f}^p > f^{p+1}$
\end{enumerate}
but not both.
\end{defn}
\noindent This is illustrated in Fig. \ref{fig:master} where two parallel edges are marked in red. They share a common parent (the gray quadrangle), but distinct children (their nodes).

\subsection{2-faces}

In the specific case $p=2$, we construct \textit{2-faces} $f_d^2$ from simple closed paths between non-interconnected $n$-tuple of nodes. We refer to the index $d$ as the \textit{degree} (or \textit{order}) \textit{of the face}. In the classic $1$-dimensional complex this refers to triangles between triples of edges, quadrangles between quadruples of nodes etc. To preserve the structural information of the 1-dimensional complex, we introduce face weights according to an analogy from classic geometry. In \cite{WJS}, we consider ``default" edge weights from node weights in analogy to the length of a curve from the positions of their end points. For the 2-dimensional case, we construct \textit{face weights} from edge weights by the analogy of area computation, as follows:

\begin{enumerate}

\item \textbf{Triangles} (d=3) \\
We use Heron's formula for the area of a triangle for given side length. In our setting this gives for $e_i \sim e_j$, $e_j \sim e_k$ and $e_k \sim e_i$ ($\sim$ denoting associations):
\begin{align}
\omega (f_3^2)=\sqrt{s (s- \omega(e_i)) \cdot (s-\omega(e_j)) \cdot (s- \omega(e_k))} \; ;\\
s = \frac{\omega(e_i) + \omega(e_j) + \omega(e_k)}{2} \; .
\end{align}

\begin{figure}[h]
	\centering
	\captionsetup{width=0.8\linewidth}
	\includegraphics[width=\linewidth]{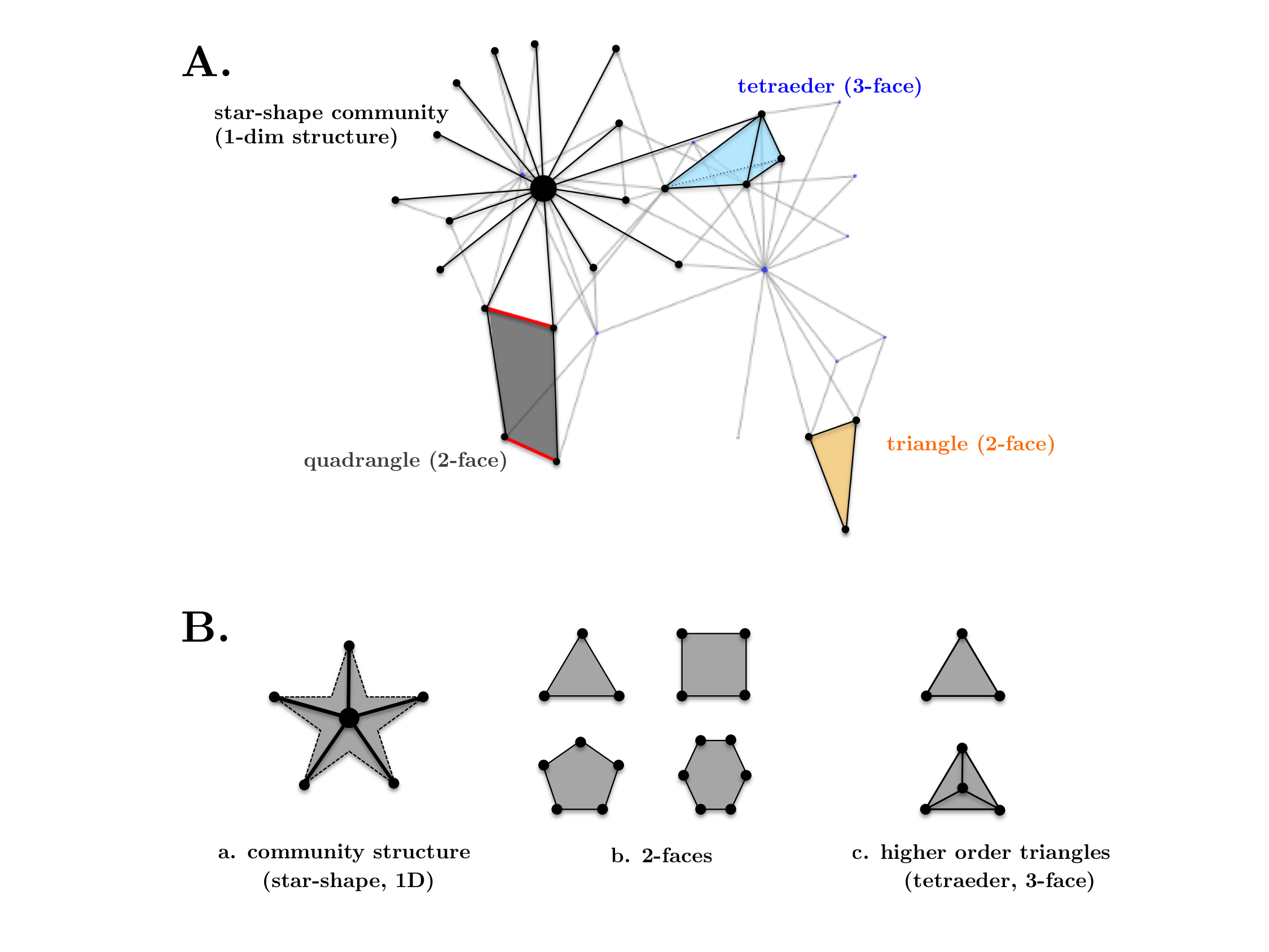}  
	\caption[Extending 1-dimensional graphs to polyhedral complexes]{Extending 1-dimensional graph with higher dimensional faces. \textbf{A:} Complex structures in networks: Community centering around a ``hub'' (black), 2-faces (gray: triangles, orange: quadrangle) and a 3-face (tetrahedron) (network data: \cite{karate}). \textbf{B:} Types of local structures in networks.}
	\label{fig:master}
\end{figure}

\item \textbf{Polygons of higher degree ($d>3$)}
\begin{itemize}
\item \textit{triangulation}:
One could triangulate each $n$-dimensional polygon and use Heron's formula to determine the area of the respective triangles (see above).
\item \textit{imposing coordinates}:
Some real-world networks come naturally with ``coordinates", e.g. information about brain regions in brain networks or locations of resources in energy networks. In those cases, we can calculate the face weights from the edge weights using \textit{Gau{\ss} trapezoid formula}, also known as \textit{Shoelace formula} \cite{shoelace}.
\end{itemize}

\end{enumerate}

\noindent We present statistics on the occurrence of higher degree faces (see Tab. \ref{tab:stats}) to underline their importance in studying the shape of graphs and applications for the analysis of real-world networks, as we will discuss later. Notably, we observe two kinds of behavior: Networks, where we find more faces of higher degree than of lower and vice versa. We will later discuss a possible relation between this observation and the global structure of the networks. The set of real-world networks that we consider here as examplary cases, are displayed in Fig. \ref{fig:examples}.

\begin{table}[H]
\begin{center}   
\begin{small}
\begin{tabular}{ p{0.9cm} p{1.3cm} p{1.3cm} p{1.3cm} p{1.3cm} p{1.3cm} p{1.5cm} p{1.6cm} }
    \hline
    \textbf{data} & \textbf{$\#$nodes}  & \textbf{$\#$edges} & \textbf{$\#$triags.} & \textbf{$\#$quad.} & \textbf{$\#$pent.} & \textbf{$\#$hexag.} & \textbf{avg. deg.} 
    \\ \hline
     (1) & 34 & 78 & 45 & 22 & 5 & 0 & 4.5882 \\
    (2) & 62 & 318 & 95 & 59 & 145 & 239 & 5.1290 \\
    (3) & 1133 & 12035 & 982 & 3434 & 5237 & 8560 & 10.6222\\
  (4) & 79 & 212 & 130 & 17 & 7 & 4 & 5.3711 \\
   \hline 
    \end{tabular}
    \caption[Statistics on higher degree faces]{Higher degree faces in selected real-world networks. We consider examples for social (\textbf{s}, (1) Zachary's karate club \cite{karate} and (2) social interactions of dolphins \cite{dolphins}), peer-to-peer (\textbf{p}, (3) email exchanges \cite{emails1,emails2}) and biological (\textbf{b}, (4) transcription, E. coli \cite{ecoli}) networks. }
    \label{tab:stats}
    \end{small}
    \end{center}
\end{table}

\begin{figure}[t]
	\centering
	\captionsetup{width=0.8\linewidth}
	\includegraphics[width=\linewidth]{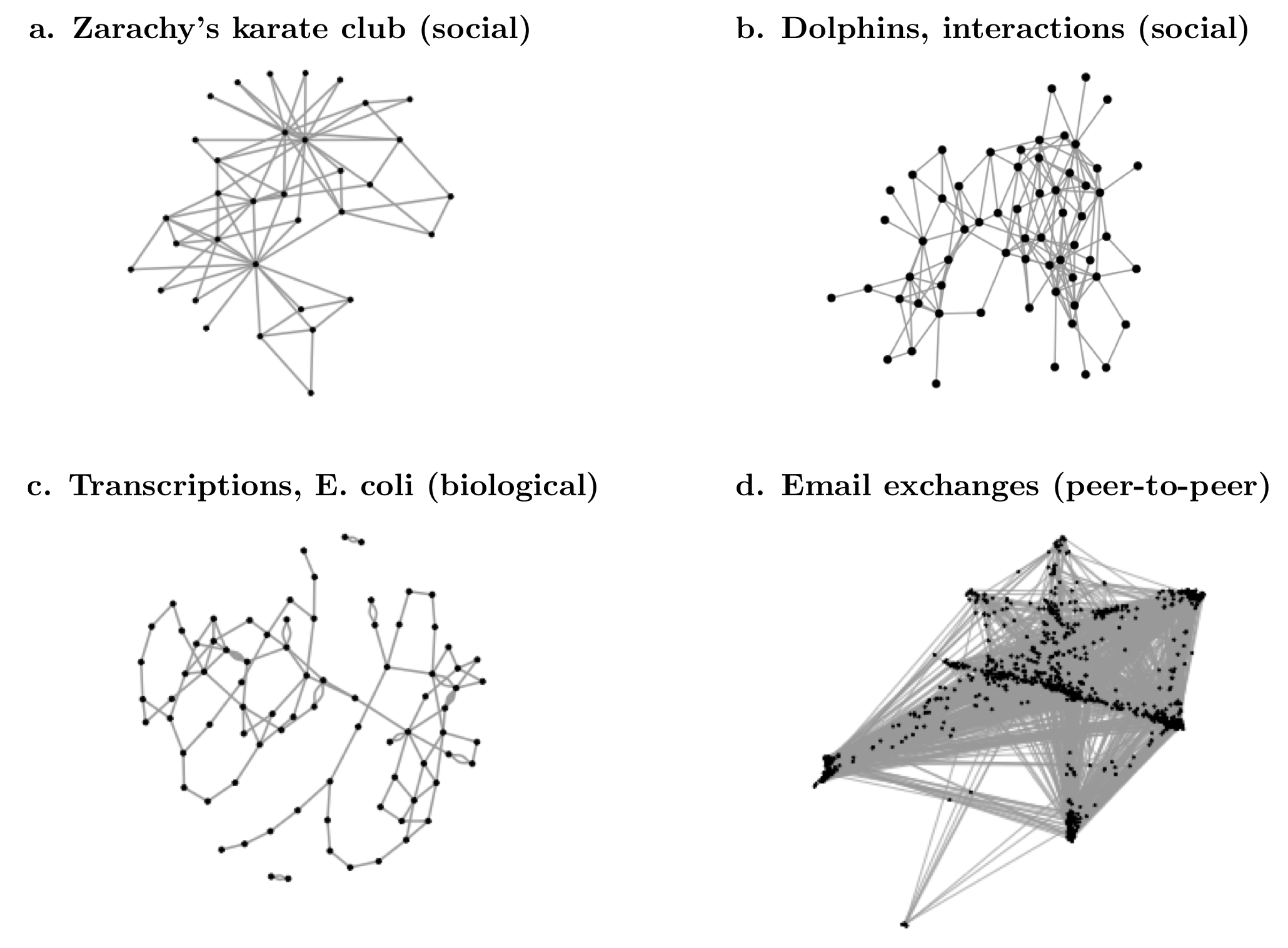}  
	\caption[Examplary real-world networks]{Set of examplary real-world networks that we analyze throughout this article. This includes social (\textbf{s}, (1) Zachary's karate club \cite{karate} and (2) social interactions of dolphins \cite{dolphins}), peer-to-peer (\textbf{p}, (3) email exchanges \cite{emails1,emails2}) and biological (\textbf{b}, (4) transcription, E. coli \cite{ecoli}) networks.}
	\label{fig:examples}
\end{figure}


\subsection{$n$-faces}
\noindent Analogously, one can introduce faces of dimension $n > 2$ by filling in the respective $n$-faces into an $(n-1)$-dimensional polyhedral complex. While this is possible for faces of any degree, we only consider  $n$-dimensional simplices, i.e. triangles for $n=2$, tetrahedra for $n=3$ etc. We impose a weighting scheme using the geometric analogue of the volume of an $n$-dimensional simplex $X$ \cite{stein}:
\begin{eqnarray}
V(X)=\frac{1}{n!} \det \left(	X_1, X_2, ... , X_n	\right) \; ,
\label{eq:simp-vol}
\end{eqnarray}
\noindent where $X_i$ are the geometric representation of the edges. We discuss two possibilities to impose a higher dimensional \textit{weighting scheme} 
\begin{eqnarray}
\omega: F_n(G) \rightarrow \mathbb{R} \; ,
\end{eqnarray}
\noindent based on the edge weights:
\begin{enumerate}
\item \textbf{Unweighted (unit) edges} \\
\begin{minipage}{\linewidth}
\hspace{0.05cm}
\begin{minipage}{0.35\linewidth}
\includegraphics[scale=0.15]{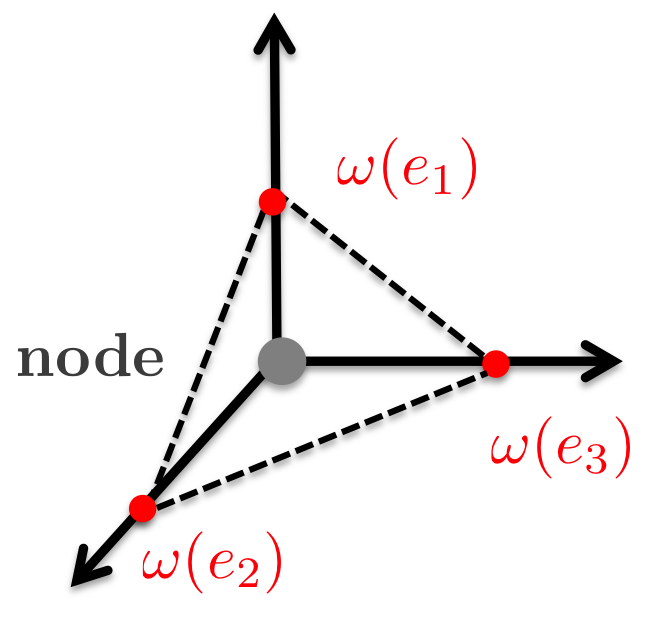} 
\captionof{figure}{Construction \\ of face weights.}
\label{fig:carth}
\end{minipage}
\vspace{0.2cm}
\begin{minipage}{0.45\linewidth}
In the combinatorial case, i.e. if we assume unweighted edges, Eq. (\ref{eq:simp-vol}) simplifies \cite{stein} to 
\begin{align}
V(X) &=\frac{1}{n!} \det \left(	\vec{e}_1, \vec{e}_2, ... , \vec{e}_n	\right) \\
&= \frac{\sqrt{(n+1)}}{n! \cdot \sqrt{2^n}} \; .
\end{align}
From this, we get the following weighting scheme for $n$-faces (of degree 3):
\begin{align}
\omega(f_{n+1} ^n)=\frac{\sqrt{n+1}}{n! \cdot \sqrt{2^n}} \; .
\end{align}
\end{minipage}
\end{minipage}
\item \textbf{Perpendicular triangles} \\
Alternatively, one could map the edge weights to Cartesian coordinates (see Fig.\ref{fig:carth}) yielding \cite{stein}
\begin{align}
X_i &= \omega (e_i) \vec{e}_i \\
\Rightarrow V(X) &=\frac{1}{n!} \det \left(
   \begin{array}{ccc}
     \omega (e_1) \vec{e}_1 & \cdots &  0 \\
     \vdots & \ddots & \vdots \\
     0 & \cdots & \omega (e_n) \vec{e}_n
   \end{array}
\right) = \frac{1}{n!} \Pi_{i=1}^n \omega (e_i) \; ,
\end{align}
and therefore the weighting scheme 
\begin{eqnarray}
\omega(f_{n+1}^n)=\frac{1}{n!} \Pi_{i=1}^n \omega (e_i) \; .
\end{eqnarray}
\end{enumerate}
%

In Tab. \ref{tab:tetra}, we detect higher dimensional faces ($n$-simplices) corresponding to higher order correlations in the already previously considered real-world networks. The results are in accordance to the intuitive expectation that $n$-faces become increasingly rare with growing $n$.
\begin{table}[H]
\begin{center}   
\begin{small}
\begin{tabular}{ p{1cm} p{1.5cm} p{1.5cm} p{1.5cm}  p{1.5cm}  p{1.6cm}  p{1.6cm} p{1.6cm} }
    \hline
    \textbf{data} & \textbf{$\#$ nodes}  & \textbf{$\#$ edges} & \textbf{$\#$ 2-face} & \textbf{$\#$ 3-face} & \textbf{$\#$ 4-faces} 
    \\ \hline
     (1) & 34 & 78 & 45 & 11 & 2  \\
    (2) & 62 & 318 & 95 & 27 & 3  \\
    (3) & 1133 & 12035 & 912 & 745 & 374 \\
   (4) & 79 & 212 & 130 & 38 & 3 \\
   \hline 
    \end{tabular}
    \end{small}
    \caption[Higher dimensional simplicial faces in real-world networks]{Higher dimensional faces ($n$-simplices) in selected real-world networks. We consider examples for social (\textbf{s}, (1) Zachary's karate club \cite{karate} and (2) social interactions of dolphins \cite{dolphins}), peer-to-peer (\textbf{p}, (3) email exchanges \cite{emails1,emails2}) and biological (\textbf{b}, (4) transcription, E. coli \cite{ecoli}) networks.}
    \label{tab:tetra}
    \end{center}
\end{table}


\section{Forman-Ricci curvature in 2D}
%
%
%

\noindent In \cite{WJS}, we introduce Forman-Ricci curvature and its associated flows, namely the Ricci-flow and the Laplace-Bertrami flow for one-dimensional, weighted cell complexes with special emphasis on real-world networks as widely used in the social and biological sciences. Now, we want to map this formalism to higher dimensions by defining Forman's curvature and Laplacian for higher dimensional faces. 

\subsection{Forman-curvature}

Recall Forman's Ricci curvature for regular CW-complexes \cite{Fo}

\[
\hspace*{-0.2cm}
\mathcal{F}(\alpha^p) = w(\alpha^p)\Big[\Big(\sum_{\beta^{p+1}>\alpha^p}\frac{w(\alpha^p)}{w(\beta^{p+1})}\;
+ \sum_{\gamma^{p-1}<\alpha^p}\frac{w(\gamma^{p-1})}{w(\alpha^p)}\Big)\; - \]
\[
\hspace*{0.5cm}
-\sum_{\alpha_1^p\parallel \alpha^p, \alpha_1^p \neq \alpha^p}\Big|\sum_{\substack{\beta^{p+1}>\alpha_1^p \\ \beta^{p+1}>\alpha^p}}\frac{\sqrt{w(\alpha^p)w(\alpha_1^p)}}{w(\beta^{p+1})}\: -
\]
\begin{equation} \label{eq:Forman}
\hspace*{-1.5cm}
- \sum_{\substack{\gamma^{p-1}<\alpha_1^p \\ \gamma^{p-1}<\alpha^p}}\frac{w(\gamma^{p-1})}{\sqrt{w(\alpha^p)w(\alpha_1^p)}}\Big|\:\;\Big] \; ,
\end{equation}

\noindent where $\alpha < \beta$ denotes $\alpha$ being a face of $\beta$ and $\alpha_1 \parallel \alpha_2$ parallel faces $\alpha_1$ and $\alpha_2$. In the case of 2-dimensional polyhedral complexes with faces of up to dimension two, Eq. \ref{eq:Forman} simplifies to the following form:

\begin{defn}[Forman-Ricci curvature, 2-Complex]
\begin{align}
	{\rm Ric}_{\rm F} (e) = \mathcal{F} (\alpha^2) = \omega (e) \left[ \left( \sum_{e \sim f^2} \frac{\omega(e)}{\omega (f^2)}+\sum_{v 			\sim e} \frac{\omega (v)}{\omega (e)}	\right) \right. \nonumber \\
  - \left. \sum_{\hat{e} \parallel e} \left| \sum_{\hat{e},e \sim f^2} \frac{\sqrt{\omega (e) \cdot \omega (\hat{e})}}{\omega (f^2)} - \sum_{v 	\sim e, v \sim \hat{e}} \frac{\omega (v)}{\sqrt{\omega(e) \cdot \omega(\hat{e})}} \right| \right] \; 
 \label{eq:Forman-2d}
\end{align}

\end{defn}

\noindent In the special case of unweighted networks, i.e. $\omega (e) = \omega (v) = 1, \; \forall e \in E(G), v \in V(G)$, the terms simplify to merely the counting of adjacent parents and children, respectively. We get

\begin{align}
{\rm Ric_F} (e) &= \# \lbrace f^2 > e\rbrace + \underbrace{\# \lbrace v < e \rbrace}_{=2} - \left(	\# \lbrace	\hat{e} | \hat{e} \| e \rbrace - \underbrace{\# \lbrace v | v \sim e, v \sim \hat{e}\;, e || \hat{e} 	\rbrace}_{=0}	\right) \\
&= \# \lbrace f^2 > e\rbrace - \# \lbrace	\hat{e} | \hat{e} \| e \rbrace + 2 \; .
\label{eq:Forman-2d-uw}
\end{align}
(The last term in Formula (14) above equals 0 due to the fact that we consider only triangular faces, therefore no parallel edges to $e$ exist, thence the set under consideration is empty.)

\noindent An analogous formula of the degenerated $1$-dimensional case (i.e. with no faces of dimension $>1$) follows immediately from $\# \lbrace \hat{e} | \hat{e} \| e \rbrace = {\rm deg} (v_1 \sim e) + {\rm deg} (v_2 \sim e)$ as

\begin{align}
{\rm Ric_F} (e) = 2 - \sum_{v \sim e} \deg(v) \; .
\end{align}


\noindent An interesting observation when constructing a $2$-dimensional complex from a $1$-dimensional graph can be made in the unweighted case: Adding a face of degree 3 increases the curvature of adjacent edges by 3, a face of degree 4 yields an increment of 2 etc. For faces of degree $n$ we formulate the following lemma:

\begin{lem}
We consider a regular, unweighted polyhedral complex $X$ with $p=2$. When adding a face $f_d^2$ of degree $d$, the Forman-Ricci curvature of the adjacent edges changes to
\begin{eqnarray}
{\rm Ric_F} (e | X+f_d^2) = {\rm Ric_F} (e | X) - d + 6 \; .
\end{eqnarray}
\label{lem:ric-face}
\end{lem}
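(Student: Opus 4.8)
The plan is to work directly with the unweighted curvature expression and to compute the \emph{difference} ${\rm Ric_F}(e \mid X+f_d^2) - {\rm Ric_F}(e \mid X)$, so that any constant term in the formula (in particular the $+2$ coming from the two bounding vertices of $e$) cancels automatically and only the genuinely affected quantities survive. By the unweighted formula, ${\rm Ric_F}(e)$ is controlled by two combinatorial counts: the number $\#\{f^2 > e\}$ of $2$-faces having $e$ as a child, and the number of edges parallel to $e$, each parallel edge contributing $1$ to the subtracted sum in the unweighted case (the absolute value collapses to either a common-parent count or a common-child count, since by definition parallel edges cannot have both). Inserting the single face $f_d^2$ can only alter these two counts, and $f_d^2$ meets exactly the $d$ edges on its boundary; hence every edge outside the polygon is inert and the whole computation localizes to the boundary cycle.

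First I would record the easy contribution: since $f_d^2$ becomes a new parent of $e$, the count $\#\{f^2 > e\}$ increases by exactly $1$. Next I would analyze how the set of edges parallel to $e$ changes, labelling the boundary edges $e = e_1, e_2, \dots, e_d$ cyclically with $e_i$ joining $v_i$ and $v_{i+1}$. The decisive point is the exclusivity clause ``but not both'' in the definition of parallel faces, which forces a case split between the two boundary edges adjacent to $e$ and the $d-3$ non-adjacent ones.

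The non-adjacent boundary edges $e_3,\dots,e_{d-1}$ share no vertex with $e$, so inserting $f_d^2$ gives them exactly one new common parent and no common child; each therefore either becomes parallel to $e$ or (if it already was, through a pre-existing common face) acquires one more common-parent term, and in both situations its contribution to the subtracted sum rises by precisely $1$, for a total increase of $d-3$. The two adjacent edges $e_2$ and $e_d$ already share the vertices $v_2$ and $v_1$ with $e$; in the generic situation where $e$ shared no face with them before the insertion they were parallel via a common child, but after gaining the common parent $f_d^2$ they satisfy both conditions at once and hence cease to be parallel, removing $2$ from the subtracted sum. The net change of the subtracted parallel sum is therefore $(d-3) - 2 = d-5$.

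Finally I would assemble the pieces: the curvature difference equals the change in the parent-face count minus the change in the subtracted sum, namely $1 - (d-5) = 6 - d$, which is exactly the claimed increment. I expect the main obstacle to be the bookkeeping around the exclusivity clause — specifically justifying that the two neighbours of $e$ switch from parallel to non-parallel — and making explicit the genericity hypothesis that $e$ does not already share a face with its two boundary neighbours; without it the number of departing parallel edges could differ and the clean $-d+6$ formula would fail. A quick sanity check against the cases $d=3$ (increment $3$, with no non-adjacent edges) and $d=4$ (increment $2$) noted just before the lemma confirms the count.
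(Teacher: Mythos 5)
Your proposal is correct and follows essentially the same route as the paper's proof: the face count $\#\{f^2>e\}$ gains $1$, the $d-3$ non-adjacent boundary edges each add $1$ to the subtracted parallel term, and the two adjacent boundary edges drop out of the parallel set because they now share both a parent and a child with $e$, giving the net change $1-((d-3)-2)=6-d$. Your explicit attention to the genericity assumption (that $e$ did not already share a face with its two neighbours) and to pre-existing common parents is a welcome refinement that the paper's own argument leaves implicit.
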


\noindent \textbf{\textit{Proof}}. The Forman-Ricci curvature for any edge in an unweighted polyhedral complex $X$ is given by Eq. (\ref{eq:Forman-2d-uw}) as
\begin{align*}
{\rm Ric_F} (e | X) = \# \lbrace f_d^2 > e\rbrace - \# \lbrace	\hat{e} | \hat{e} \| e \rbrace + 2 \; .
\end{align*}
Adding a face $f_d^2$ adjacent to $e$ trivially adds 1 to the first term. Furthermore it makes $d-1$ edges potentially parallel to $e$, since they are now sharing a common parent. Among them are two to $e$ adjacent edges that must not be counted twice, leaving us with $d-3$ new parallel edges. However, the two adjacent edges are now having both a common parent and a common child with $e$, i.e. are not parallels anymore. In summary, this gives
\begin{align*}
{\rm Ric_F} (e | X+f_d^2) &= \left( \# \lbrace f_d^2 > e\rbrace +1 \right) - \left( \# \lbrace \hat{e} | \hat{e} \| e \rbrace - 2 + (d-3)\right) +2 \\
&= \# \lbrace f_d^2 > e\rbrace - \# \lbrace \hat{e} | \hat{e} \| e \rbrace -  d +6 \; ,
\end{align*}
or
\begin{align*}
{\rm Ric_F} (e | X+f_d^2) = {\rm Ric_F} (e | X) - d + 6 \; .
\end{align*}
\begin{flushright}
$\Box$
\end{flushright}
\noindent This lemma provides an easy computable formula to study the Ricci curvature across large complex networks with consideration of higher degree faces. It is applicable for cases, where the respective networks come unweighted, or can be mapped to an unweighted one, by imposing a meaningful threshold (regarding the underlying data).

\subsection{Bochner Laplacian}

Analogously, we can define the \textit{ rough Laplacian} via the Bochner-Weizenb{\"o}ck-formula

\begin{eqnarray}
\Delta_{F}^1 X = (\Box_1 - {\bf {\rm Ric_F}})X\ ,
\label{eq:BW}
\end{eqnarray}

\noindent closely related to the Forman-Ricci curvature. In fact, this relation gives rise to an important type of geometric flow, the \textit{Laplacian flow} that has potential applications in the denoising of networks, as we discuss in the previous article \cite{WJS2} for the 1-dimensional case. Recall the general form of the Bochner Laplacian for $p$-dimensional cellular complexes \cite{Fo,WJS}

\begin{defn}[Bochner Laplacian]
	
\[\hspace*{-0.2cm}
\Box_p(\alpha_1^p,\alpha_2^p) =
\sum_{\substack{\beta^{p+1}>\alpha_1^p \\ \beta^{p+1}>\alpha_2^p}}\epsilon_{\alpha_1,\alpha_2,\beta}\frac{\sqrt{w(\alpha_1^p)w(\alpha_2^p)}}{w(\beta^{p+1})}\: +
\]
\begin{equation}  \label{eq:bochner}
\hspace*{1.3cm}
+  \sum_{\substack{\gamma^{p-1}<\alpha_1^p \\ \gamma^{p-1}<\alpha_2^p}}\epsilon_{\alpha_1,\alpha_2,\gamma}\frac{w(\gamma^{p-1})}{\sqrt{w(\alpha_1^p)w(\alpha_2^p)}}\,,
\end{equation}
for which we can write in the special case of 2-dimensional polyhedral complexes:

\begin{align}
\Box_2= \Box_2 (e_1,e_2) = \sum_{e_1 \in f_n^2, e_2 \in f_n^2} \epsilon_{e_1, e_2, f_n^2} \frac{\sqrt{\omega(e_1) \omega(e_2)}}{\sqrt{\omega(f_n^2)}} + \sum_{v \sim e_1, v \sim e_2} \epsilon_{e_1, e_2, v} \frac{\omega(v)}{\sqrt{\omega(e_1) \omega(e_2)}} \; .
\label{eq:bochner-2d}
\end{align}
\end{defn}

\noindent In the case of unweighted networks, this simplifies to the trivial form

\begin{eqnarray}
\Box_2 (e) = \# \; \lbrace f_n^2 > e \rbrace +2 \; .
\end{eqnarray}


%
%
%
%
%


\section{Network-theoretic Gau{\ss}-Bonnet Theorem and Euler characteristic}


Forman's discretization of Ricci curvature satisfies a number of essential properties mandatory for any synthetic definition of Ricci curvature, including discrete versions of \textit{Myers' theorem} on the finiteness of the diameter for spaces satisfying ${\rm Ric} > 0$ \cite{Be} and of \textit{Bochner's theorem} relating the non-negativity of Ricci curvature to the first homology group (see, e.g. \cite{KMM}). However, it fails to satisfy a discrete version of one of the most important classical results, namely the {\em Gau{\ss}-Bonnet Theorem} for 2-dimensional cell complexes. Recall that the classical Gau{\ss}-Bonnet Theorem (for surfaces) states that 
\begin{eqnarray}
\int_SKdA = 2\pi\chi(S)
\end{eqnarray}
represents the {\em Euler characteristic} of the surface $S$. With the {\em Euler characteristic} one can topologically classify manifolds, making a Gau{\ss}-Bonnet-type theorem a much-desired tool for our proposed goal of classifying the shape of networks. 

The failure to construct a Gau{\ss}-Bonnet Theorem in the case of 2-dimensional cell complexes results from the existence of ``metrics'' of negative Forman-Ricci curvature (for every edge $e$) on any simplicial complex of dimension $\geq 2$ (following from results of Gao, Gao-Yau and Lohkamp, see \cite{Fo}). In more detail, now fitting discrete analogue of the Gau{\ss}-Bonnet theorem does not hold for the Ricci curvature in dimension 2, since it implies that both the sphere and the torus can have negative curvature everywhere, which clearly contradicts the Gau{\ss}-Bonnet Theorem. 

The absence of a fitting analogue of the Gau{\ss}-Bonnet theorem represents a serious disadvantage that diminish the elegance of the other results. However, rather recently, an extension of Forman's Ricci curvature was introduced by E. Bloch \cite{Bloch} that does provide a Gau{\ss}-Bonnet-type theorem for cellular complexes. We have recently introduced Forman's Ricci curvature as a characteristic for networks viewed as classic graphs in one dimension and want to build on Bloch's theoretical results for introducing a characterization of the shape of networks, using this formalism.

\subsection{Bloch's combinatorial Forman-Ricci curvature}

Before introducing Bloch's definition, let us note that we consider only the combinatorial case here, i.e. weights $\equiv 1$, reducing the Forman-Ricci curvature to the following simple expression:

\begin{equation} \label{eq:CombRicci}
{\rm Ric_F}(e) = \# \lbrace f^2 > e \rbrace - \# \lbrace \hat{e} : \hat{e} \| e \rbrace + 2\,;
\end{equation}
%


\begin{rem}
	It should be emphasized that, given the fact that the Ricci curvature is a measure (quantity) associated to edges,  i.e. to the common boundary of a number of 2-dimensional faces, only such faces appear in the computation of Forman-Ricci curvature. It follows that (\ref{eq:CombRicci}) is the pertinent one, regardless of the dimensionality of the complex (given the assumption that this dimension is $\geq 2$).
\end{rem}

\noindent Before being able to present Bloch's definition, we first have to introduce some notation (in which we follow Bloch):

\noindent In the following, $F^k=\lbrace f^k\,|\, k = 1,\ldots,j_k	\rbrace$ denotes the set of $k$-dimensional faces of a given cell complex X, and analogous to the former notion, $\# F^k$ the number of faces in the set. 

The {\it Euler characteristic} of a cell complex $X$ is then defined as: 
	
	\begin{equation}    \label{eq:EulerCharacteristic}
	\chi(X) = \sum_{k=0}^{d}(-1)^k F^k\,;
	\end{equation}
where $d$ denotes the dimension of the complex. Furthermore we introduce the following quantities:

\begin{defn}[Auxiliary functions]
Let x be an $i$-dimensional face of a 2-dimensional complex X.
\begin{enumerate} 
\item $A_i(x) = \# \{y \in F_{i+1}, x < y\}$\,;
		
\item $B_i(x) = \# \{z \in F_{i-1}, z < x\}$\,;
		
\item $U_i(x) = \sum_{y > x} B_{i+1}(y)$\,;
		
\item $D_i(x) = \sum_{z < x}A_{i-1}(z)$\,;
		
\item $N_i(x) = N_i^1(x) \Delta N_i^2(x)$, with \\
$	N_i^1(x) = \# \lbrace w \in F_i, \exists v \in F_{i+1} \; {\rm s.t.} \; x < v, w < v \rbrace $, \\
$	N_i^2(x) = \# \lbrace w \in F_i, \exists u \in F_{i-1} \; {\rm s.t.} \; u < x, u < w \rbrace $.

\end{enumerate}

\noindent Here $\Delta$ denotes the symmetric difference and, by convention, summation over the empty set is considered to be zero.
		
\end{defn}

\begin{figure}[h]
	\centering
	\captionsetup{width=0.8\linewidth}
	\includegraphics[width=\linewidth]{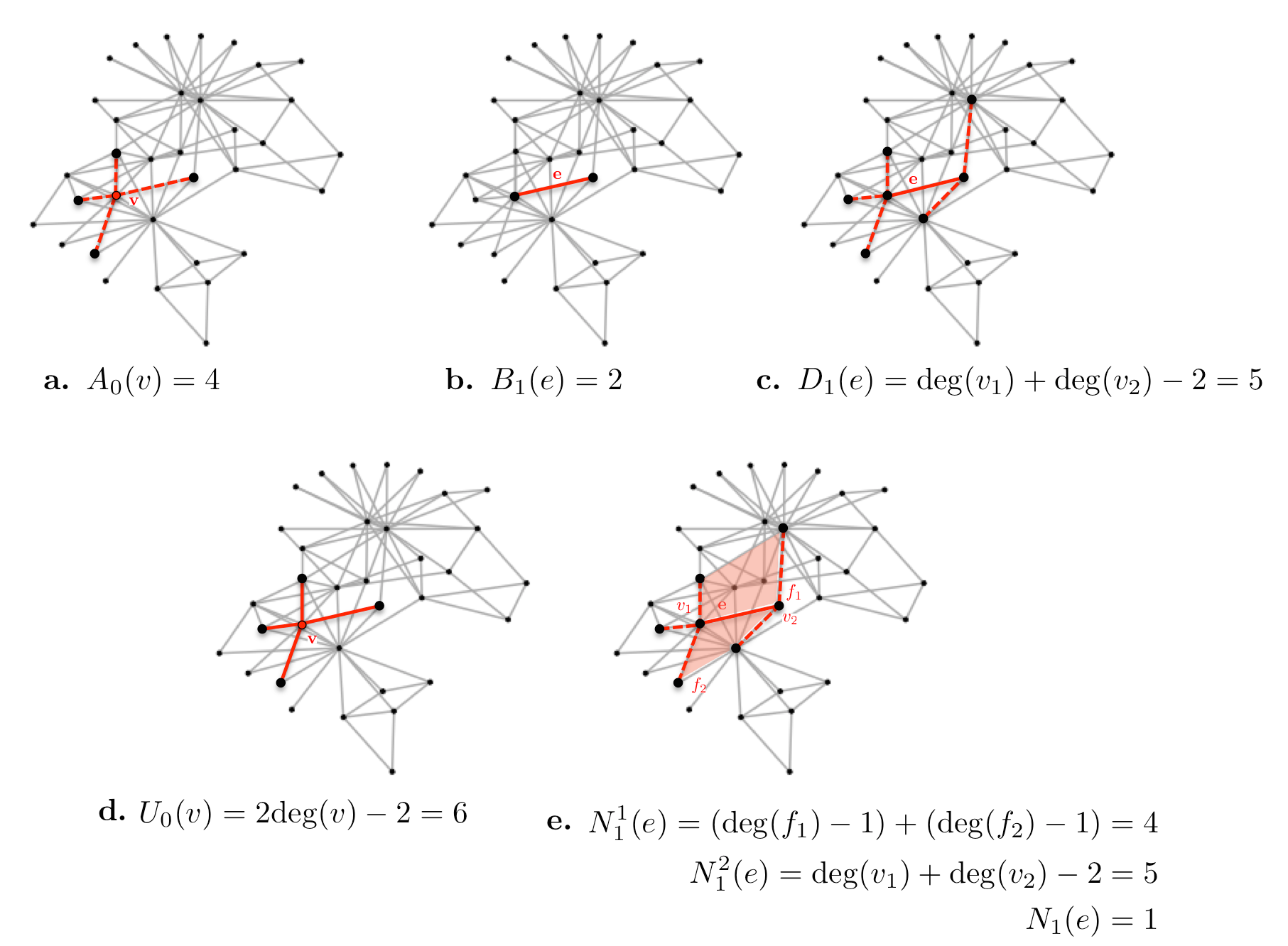}  
	\caption[]{The auxiliary functions (illustrated on Zachary's karate club).}
	\label{fig:examples}
\end{figure}

%
%
%
%
%
%
%
%
%
%
%
	
\noindent The notation above is somewhat technical, however the importance of the newly introduced quantities becomes clearer with the observation that Forman's Ricci curvature can be expressed in terms of these very symbols. More precisely, we have
	
	\begin{equation}
	{\rm Ric_F}(e) = A_1(e) + B_1(e) - N_1(e)\; ,
	\end{equation}
\noindent for all $e \in F_1$. Moreover, using the quantities above, Bloch defines a triplet of discrete \textit{curvature functions}:
	
	\begin{defn}[Curvature-functions]
	\label{def:curv-func}
		Let $X$ be a 2-dimensional cell complex. We define the curvature functions $R_i:F_i \rightarrow \mathbb{R}$, $i = 1,2,3$ as follows:
	\begin{enumerate}
	\item $R_0 (v) = 1 + \frac{3}{2}A_0(v) - A_0^2(v)\,;$
	\item $R_1(e) = 1 + 6A_1(e) + \frac{3}{2}B_1(e) - U_1(e) - D_1(e)\,;$
	\item $R_2(f) = 1 + 6B_2(f) - B_2^2(f)\,;$
%
%
%
%
%
		%
		\end{enumerate}
		where $v,e, f$ denote vertices, edges and faces, respectively, i.e. $v \in F_0, e \in F_1, f \in F_2$.
\end{defn}
	
	
\subsection{Network-theoretic Gau{\ss}-Bonnet theorem}
	
	We can now formulate Bloch's -- from our viewpoint -- most important result:
	
	\begin{thm}[Bloch, \cite{Bloch}, Theorem 2.4] \label{thm:Bloch}
		Let $X$ be 2-dimensional cell complex. Then:
		
		\begin{equation}
		\sum_{v \in F_0}R_0(v) - \sum_{e \in F_1}R_1(e) + \sum_{f \in F_2}R_2(f) = \chi(X)\,.
		\end{equation}
	\end{thm}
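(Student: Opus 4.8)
The plan is to expand the left-hand side, strip off the constant terms, and reduce the identity to a handful of incidence relations proved by double counting. Writing out the three curvature functions from Definition~\ref{def:curv-func}, each contributes a leading $1$; summing $R_0$ over vertices, $-R_1$ over edges, and $R_2$ over faces, these constant terms already yield
\[
\sum_{v\in F_0}1 - \sum_{e\in F_1}1 + \sum_{f\in F_2}1 = \#F_0 - \#F_1 + \#F_2 = \chi(X),
\]
by the definition \eqref{eq:EulerCharacteristic}. It therefore suffices to show that the sum of the remaining (non-constant) terms vanishes, i.e. that
\[
\sum_{v}\Big(\tfrac{3}{2}A_0(v)-A_0(v)^2\Big) - \sum_{e}\Big(6A_1(e)+\tfrac{3}{2}B_1(e)-U_1(e)-D_1(e)\Big) + \sum_{f}\Big(6B_2(f)-B_2(f)^2\Big)=0.
\]

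First I would record four double-counting identities governing these sums. The two elementary ones come from counting incidences in two ways: $\sum_{v}A_0(v)=\sum_{e}B_1(e)$ (each vertex--edge incidence seen once from each side) and $\sum_{e}A_1(e)=\sum_{f}B_2(f)$ (each edge--face incidence). The two identities involving $U_1$ and $D_1$ are the substance of the argument. Unwinding $\sum_e U_1(e)=\sum_e\sum_{f>e}B_2(f)$ and interchanging the order of summation, each face $f$ is counted once for every edge $e<f$, i.e. $B_2(f)$ times, so $\sum_e U_1(e)=\sum_f B_2(f)^2$. Symmetrically $\sum_e D_1(e)=\sum_e\sum_{v<e}A_0(v)=\sum_v A_0(v)^2$, since each vertex $v$ contributes once for each of the $A_0(v)$ edges lying above it.

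Substituting these four identities into the displayed non-constant expression, the quadratic terms cancel in pairs: $-\sum_v A_0(v)^2$ against $+\sum_e D_1(e)=\sum_v A_0(v)^2$, and $+\sum_e U_1(e)=\sum_f B_2(f)^2$ against $-\sum_f B_2(f)^2$. The linear terms cancel likewise, using $6\sum_f B_2(f)=6\sum_e A_1(e)$ and $\tfrac{3}{2}\sum_v A_0(v)=\tfrac{3}{2}\sum_e B_1(e)$. Hence the entire non-constant part is zero, and combining with the constant contribution gives $\sum_v R_0(v)-\sum_e R_1(e)+\sum_f R_2(f)=\chi(X)$, as claimed.

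The computation makes transparent that the seemingly arbitrary coefficients $6$ and $\tfrac{3}{2}$ in Definition~\ref{def:curv-func} are calibrated precisely so that the linear terms match across adjacent dimensions, while the quadratic terms $A_0^2$ and $B_2^2$ are arranged to absorb $D_1$ and $U_1$ respectively. The only genuine subtlety — and the step I would verify most carefully — is the bookkeeping in the interchange of summation defining $U_1$ and $D_1$, ensuring no incidence is over- or under-counted when the complex contains non-simplicial faces (so that $B_2(f)$ may exceed $3$). Since the four identities above hold verbatim irrespective of face shapes, the argument applies to an arbitrary $2$-dimensional cell complex.
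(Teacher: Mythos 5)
Your argument is correct and complete. Note that the paper itself offers no proof of this statement --- it is imported verbatim as Theorem 2.4 of Bloch's paper --- so there is nothing internal to compare against; but your double-counting derivation is exactly the natural (and, as far as the structure of the $R_i$ suggests, the intended) one. The constant terms assemble into $\#F_0-\#F_1+\#F_2=\chi(X)$ by definition \eqref{eq:EulerCharacteristic}, and your four incidence identities
\[
\sum_{v}A_0(v)=\sum_{e}B_1(e),\quad \sum_{e}A_1(e)=\sum_{f}B_2(f),\quad \sum_{e}U_1(e)=\sum_{f}B_2(f)^2,\quad \sum_{e}D_1(e)=\sum_{v}A_0(v)^2
\]
are each a legitimate interchange of the order of summation over the incidence relation $<$, valid for any finite $2$-complex with the counting functions as defined in the paper. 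Substituting them does cancel the linear and quadratic terms in pairs, so the non-constant part vanishes and the identity follows. Your closing observation is also the right one: the coefficients $6$ and $\tfrac32$ play no role in the proof beyond matching across adjacent dimensions (any pair of constants attached consistently to the incidence counts would work for the Gau{\ss}--Bonnet identity alone); they are fixed by the separate requirement, used later in the paper, that $R_1$ reproduce the Forman--Ricci curvature $A_1(e)+B_1(e)-N_1(e)$ under the stated intersection condition on $2$-cells.
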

	
\noindent The relevance of this Gau{\ss}-Bonnet type theorem follows from the following important identity between the 1-dimensional curvature functions and the Ricci curvature for polyhedral complexes:
\begin{equation} \label{eq:ER1=Ric}
		R_1(e) = {\rm Ric}(e)\,,
\end{equation} 
\noindent for all $e \in F_1$. Since polyhedral complexes represent a special case of cell complexes, the last two formulas above imply that, for a 2-dimensional polyhedral complex, the following holds:

\begin{equation}  \label{eq:Bloch-GaussBonnet}
\sum_{v \in F_0}R_0(v) - \sum_{e \in F_1}{\rm Ric}(e) + \sum_{f \in F_2}R_2(f) = \chi(X)\,.
\end{equation}

\begin{rem}
\noindent A sufficient condition for the equality (\ref{eq:ER1=Ric}) is that the intersection of any pair of 2-cells (faces) contains at most one 1-cell, i.e. an edge. Unfortunately, in our context this condition fails in general, because it is possible that, for instance, two edges adjacent to a vertex are part of both a triangle and a quadrangle, such that the triangle does not represent a subset of the quadrangle (i.e. they describe different types and orders of correlation).\\
\end{rem}	
	
	

%
\begin{rem}
In our context, the importance of Theorem \ref{thm:Bloch} above resides in the fact that it allows us to define, in strict analogy to the surface (both smooth and discrete) case, a {\em prototype} or (or {\em reference}) {\em network} for the Forman-Ricci flow in terms of discrete curvatures, and in particular of the extended Forman-Ricci curvature in a mathematically sound manner. By ``pruning'' the leaves, i.e. neglecting isolated edges (not at the boundary of any 2-face), one can construct the desired 2-dimensional prototype \footnote{The reason we prefer the less common name of ``prototype'', rather than ``model'' networks, is that the term ``model networks'' is already established in the field, and denotes such combinatorial/probabilistic constructs as those proposed by Erd\"{o}s and Renyi \cite{er1,er2}, Watts and Strogatz \cite{ws} and Albert and Barabasi  \cite{ab}.} networks. \\
\end{rem}

\noindent For the specific case of 2-dimensional complexes that we consider here, the curvature functions defined in Def. (\ref{def:curv-func}) simplify to
\begin{align}
R_0 (v) &= 1 + \frac{3}{2} {\rm deg}(v) - {\rm deg}^2 (v) \; ;\\
R_1 (e) &= 4 + 6 \cdot \# \lbrace f_n^2	 > e \rbrace - \sum_{f_n^2 > e} n - \sum_{v < e} {\rm deg} (v) \; ;\\
R_2 (f_n^2) &= 1 + 6n + n^2 \; .
\label{eq:curv-func}
\end{align}
With this, we get a directly computable form of the Euler-characteristic:
\begin{defn}[Euler characteristic for 2-dimensional polyhedral complexes]
\begin{align}
\chi (X) &= \sum_{v \in V(X)} \left( 1 + \frac{3}{2} {\rm deg}(v) - 	{\rm deg}^2(v)	\right)  \\
&- \sum_{e \in E(X)} \left(	4 + 6 \cdot \# \lbrace	f_n^2 > e	\rbrace	 + \sum_{f_n^2 > e} n + \sum_{v < e} {\rm deg}(v) \right) \\
&+ \sum_{f_n^2 \in F_n(X)} 1 + 6n + n^2 \; .
\end{align}


\end{defn}

%
	
%
%

\subsection{A topological implication}

One of the main strengths of Forman's curvature resides in the fact that its sign provides important information on the topology of the underlying complex. While not all of Forman's results can be generalized to the case of Bloch's extension of the original definition, some topological implications can be transferred as discussed below. However, this is only possible in terms of the mean of the curvature function $R_1$ rather than its edge-valued, basic definition. This is a consequence of the fact that in the Gau{\ss}-Bonnet Theorem (\ref{eq:Bloch-GaussBonnet}) above, $R_1$ appears with a ``-'' sign. This fact also explains why we cannot expect to obtain a result of the required type in the most general case, but rather have to consider some (combinatorial) restrictions. A number of such restrictions \cite{Bloch} are listed below, in increasing order of generality:

\begin{enumerate}
	\item $\overline{B}_1 \geq \frac{20}{9}$, where $\overline{B}_1 = \frac{1}{\# F_1}\sum_{e \in F_1}B_1(e)$;
	
	\item $\overline{B}_1 = 2$ and  $\overline{A}_1 \geq 2$, where $\overline{A}_1 = \frac{1}{\# F_1}\sum_{e \in F_1}A_1(e)$;
	
	\item \((\overline{A}_1 + \overline{B}_1)^2 - 6\overline{A}_1 -  \frac{3}{2}\overline{B}_1 - 1 \geq 0\).
\end{enumerate}

\noindent The first -- and main -- result regarding a topological characterization of a complex in terms of the mean value of $R_1$, namely $\overline{R}_1 = \frac{1}{F_1}\sum_{e \in F_1}R_1(e)$, is the following:

\begin{thm}[Bloch, \cite{Bloch}, Theorem 2.7] \label{thm:Bloch1}
	Let $X$ be a 2-dimensional cell complex, satisfying any one of the conditions (1) - (3) above. Then, if $\overline{R}_1 >0$, then $\chi(X) > 0$.
\end{thm}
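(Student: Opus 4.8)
The plan is to feed the hypothesis $\overline{R}_1>0$ into the Gau\ss{}-Bonnet identity of Theorem \ref{thm:Bloch} and chain together two applications of the Cauchy-Schwarz inequality. Writing $a=\#F_0$, $b=\#F_1$, $c=\#F_2$, the identity gives $\chi(X)=a-b+c$, so the target reduces to the purely combinatorial inequality $b<a+c$. The reason I expect this to be reachable is precisely the sign issue flagged in the remarks: since $R_1$ enters Gau\ss{}-Bonnet with a minus sign, a large mean curvature forces the ``defect'' terms $U_1$ and $D_1$ to be small on average, and (by convexity) smallness of those forces $a$ and $c$ to be large relative to $b$. Concretely, I would first average the simplified curvature function over all edges to obtain $\overline{R}_1 = 1 + 6\overline{A}_1 + \tfrac{3}{2}\overline{B}_1 - \overline{U}_1 - \overline{D}_1$, so that $\overline{R}_1>0$ is equivalent to
\[
\overline{U}_1 + \overline{D}_1 < 1 + 6\overline{A}_1 + \tfrac{3}{2}\overline{B}_1 .
\]

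The next step is the two double-counting identities $\sum_e U_1(e)=\sum_f B_2(f)^2$ and $\sum_e D_1(e)=\sum_v A_0(v)^2$, together with $\sum_f B_2(f)=\sum_e A_1(e)=b\,\overline{A}_1$ and $\sum_v A_0(v)=\sum_e B_1(e)=b\,\overline{B}_1$. Applying Cauchy-Schwarz to each sum of squares yields the lower bounds $\overline{U}_1\geq b\,\overline{A}_1^{\,2}/c$ and $\overline{D}_1\geq b\,\overline{B}_1^{\,2}/a$. Inserting these into the displayed inequality and invoking condition (3), rewritten as $1 + 6\overline{A}_1 + \tfrac{3}{2}\overline{B}_1 \leq (\overline{A}_1+\overline{B}_1)^2$, produces
\[
\frac{b\,\overline{A}_1^{\,2}}{c} + \frac{b\,\overline{B}_1^{\,2}}{a} < (\overline{A}_1+\overline{B}_1)^2 .
\]

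Finally I would apply Cauchy-Schwarz a second time, now in its Engel (``Titu'') form $\tfrac{\overline{A}_1^{\,2}}{c}+\tfrac{\overline{B}_1^{\,2}}{a}\geq \tfrac{(\overline{A}_1+\overline{B}_1)^2}{a+c}$, which bounds the left side below by $b(\overline{A}_1+\overline{B}_1)^2/(a+c)$. Since $\overline{A}_1+\overline{B}_1>0$ makes $(\overline{A}_1+\overline{B}_1)^2>0$, dividing through gives $b/(a+c)<1$, i.e.\ $\chi(X)=a-b+c>0$, as desired. To cover conditions (1) and (2) it then suffices to check that each implies (3): substituting $\overline{B}_1=2$ reduces (3) to $\overline{A}_1(\overline{A}_1-2)\geq 0$, which holds for $\overline{A}_1\geq 2$; and inserting the critical value $\overline{A}_1=3-\overline{B}_1$ shows the unconstrained minimum in $\overline{A}_1$ of the left side of (3) equals $\tfrac{9}{2}\overline{B}_1-10$, nonnegative exactly when $\overline{B}_1\geq\tfrac{20}{9}$ (the range $\overline{B}_1>3$, where the admissible minimizer is $\overline{A}_1=0$, being immediate).

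The step I would scrutinize most carefully, and the main obstacle, is bookkeeping the direction of every inequality. Because $R_1$ carries a minus sign, the convexity (Cauchy-Schwarz) bounds on $\overline{U}_1$ and $\overline{D}_1$ must be \emph{lower} bounds, and the argument only closes because the expression $1 + 6\overline{A}_1 + \tfrac{3}{2}\overline{B}_1$ created by averaging is exactly the one dominated by $(\overline{A}_1+\overline{B}_1)^2$ under condition (3). Recognizing that this common quadratic is the hinge that lets the two Cauchy-Schwarz applications be chained is the conceptual crux; the remaining verifications are routine double counting and the elementary reductions (1), (2) $\Rightarrow$ (3).
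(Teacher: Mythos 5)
The paper does not actually prove Theorem \ref{thm:Bloch1}: it imports the statement verbatim from Bloch \cite{Bloch}, so there is no internal proof to compare yours against. Your argument is nonetheless correct and self-contained, and it reconstructs what is essentially Bloch's own route. The two double-counting identities $\sum_{e}U_1(e)=\sum_{f}B_2(f)^2$ and $\sum_{e}D_1(e)=\sum_{v}A_0(v)^2$ are exactly right (each follows from interchanging the order of summation over the incidence relation), the Cauchy--Schwarz lower bounds $\overline{U}_1\geq b\,\overline{A}_1^{\,2}/c$ and $\overline{D}_1\geq b\,\overline{B}_1^{\,2}/a$ follow, and chaining them through $\overline{R}_1>0$, condition (3), and the Engel form does give $b<a+c$, hence $\chi(X)>0$ by the definition (\ref{eq:EulerCharacteristic}). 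Note that, despite your framing, you never actually use the Gau{\ss}--Bonnet identity of Theorem \ref{thm:Bloch}, only the combinatorial definition of $\chi$ and the definition of $R_1$ --- which is fine, and arguably cleaner. Your reductions of (1) and (2) to (3) also check out: with $\overline{B}_1=2$ the left side of (3) is $\overline{A}_1(\overline{A}_1-2)$, and the constrained minimum over $\overline{A}_1\geq 0$ for fixed $\overline{B}_1\leq 3$ is indeed $\tfrac{9}{2}\overline{B}_1-10$. Two small points should be made explicit to make the argument airtight: (i) you divide by $(\overline{A}_1+\overline{B}_1)^2$ at the last step, so you must observe that condition (3) itself forces $(\overline{A}_1+\overline{B}_1)^2\geq 1+6\overline{A}_1+\tfrac{3}{2}\overline{B}_1\geq 1>0$ (under (1) or (2) this is immediate from $\overline{B}_1\geq 2$); and (ii) the term $\overline{A}_1^{\,2}/c$ is ill-formed when $\#F_2=0$, in which case $\overline{A}_1=\overline{U}_1=0$ and the chain degenerates to $b\,\overline{B}_1^{\,2}/a<\overline{B}_1^{\,2}$, giving $b<a\leq a+c$ directly. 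Neither issue is a genuine gap.
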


\noindent For the here considered special case of 2-dimensional, polyhedral complexes, we obtain the following lemma:
\begin{lem}
\label{lem:chi-curv}
Let X be a 2-dimensional polyhedral complex with (1) $\overline{A}_1 \geq 2$ and (2) $\overline{R}_1 > 0$, then $\chi(X)>0$.
\end{lem}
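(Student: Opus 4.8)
The plan is to reduce the statement directly to Bloch's topological criterion, Theorem \ref{thm:Bloch1}, by verifying that the polyhedral structure automatically supplies the missing hypothesis. Recall that that theorem guarantees $\chi(X) > 0$ whenever $\overline{R}_1 > 0$ together with \emph{any one} of the three combinatorial conditions (1)--(3) listed above. Since the present lemma already assumes $\overline{R}_1 > 0$, it suffices to show that condition (2), namely $\overline{B}_1 = 2$ together with $\overline{A}_1 \geq 2$, is met.

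The second half of condition (2) is hypothesis (1) of the lemma, so the only thing to establish is $\overline{B}_1 = 2$. Here I would invoke the defining property of a polyhedral complex: by item~1 of the definition of a 2-dimensional polyhedral complex, every edge $e \in E$ is incident to exactly two nodes. Unwinding the definition of the auxiliary function $B_1$, we have $B_1(e) = \#\{z \in F_0 : z < e\}$, which counts precisely the vertices incident to $e$. Hence $B_1(e) = 2$ for every edge, and therefore the mean satisfies
\[
\overline{B}_1 = \frac{1}{\# F_1}\sum_{e \in F_1} B_1(e) = \frac{1}{\# F_1}\sum_{e \in F_1} 2 = 2 \; .
\]

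With $\overline{B}_1 = 2$ and $\overline{A}_1 \geq 2$ both in hand, condition (2) of Theorem \ref{thm:Bloch1} is satisfied. Since $\overline{R}_1 > 0$ by assumption, that theorem immediately yields $\chi(X) > 0$, completing the argument.

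There is essentially no serious obstacle here: the lemma is a specialization of Bloch's general cell-complex result to the polyhedral setting, and the entire content lies in the observation that the polyhedral axiom (each edge has exactly two endpoints) pins down $\overline{B}_1$ to the value $2$ rather than leaving it as a free parameter. The one point deserving care is making sure the convention $B_1(e) = \#\{z \in F_0 : z < e\}$ is the correct reading of the auxiliary functions, so that $B_1$ indeed counts boundary vertices and not, say, incident faces; once that is confirmed, the conclusion is immediate. It is also worth noting that condition (3) could be checked directly as an alternative route, but condition (2) gives the cleanest reduction given the hypotheses we are handed.
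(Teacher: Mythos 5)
Your proposal is correct and follows essentially the same route as the paper: both arguments observe that the polyhedral axiom forces $B_1(e)=2$ for every edge, hence $\overline{B}_1=2$, so that the hypothesis $\overline{A}_1 \geq 2$ realizes condition (2) of Theorem \ref{thm:Bloch1}, which then yields $\chi(X)>0$ from $\overline{R}_1>0$. The paper additionally remarks that condition (1) can never hold and that condition (3) also collapses to $\overline{A}_1 \geq 2$ once $\overline{B}_1=2$ is substituted, but these are side observations rather than a different argument.
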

\begin{proof}
We show that for a 2-dimensional polyhedral complex, conditions (2) and (3) in Thm. \ref{thm:Bloch1} imply $\overline{A}_1 \geq 2$. For this, we first compute $\overline{B}_1$ and $\overline{A}_1$:
\begin{align}
B_1 (e) = \# \lbrace	v \in F_0, \; v < e \rbrace = 2 \; ; \\
\overline{B}_1 = \frac{1}{\# F_1} \left( \# F_1 \cdot 2 \right) = 2 \; ;
\end{align}
and
\begin{align}
A_1 (e) = \# \lbrace	f \in F_2, \; e < f	\rbrace = \# \lbrace	f>e \rbrace \; ; \\
\overline{A}_1 = \frac{1}{\# F_2} \sum_{e \in F_1} \# \lbrace	f>e	\rbrace \; .
\end{align}
\noindent From this, we see that $\overline{B}_1=2$ holds trivially, i.e. (2) simplifies to $\overline{A}_1 \geq 2$.
\begin{rem}
Since $B_1 (e)=2 \quad \forall e \in F_1$, condition (1) is never fulfilled.
\end{rem}
\noindent Furthermore, condition (3) gives
\begin{align}
\left(	 \overline{A}_1 + \overline{B}_1 \right)^2 - 6 \overline{A}_1 - \frac{3}{2} \overline{B}_1 - 1 \geq 0 \\
^{\overline{B_1}=2}\Rightarrow \overline{A}_1 ^2 - 2 \overline{A}_1 \geq 0 \; ,
\end{align}
i.e. $\overline{A}_1 \geq 2$.
\end{proof}

\begin{rem}
The condition given by Lemma \ref{lem:chi-curv} link back to our earlier comment on the choice of polyhedral complexes as higher dimensional representations for networks. This choice imposes restrictions on both the connectivity and possibly occurring degenerated substructures. Lemma\ref{lem:chi-curv} only holds for complexes without isolated edges, i.e. is not generally applicable to real-world networks. However, by ``pruning the leaves", i.e. by neglecting isolated nodes and edges one could overcome this issue. 
\end{rem}


\noindent We compute\footnote{See \textit{Supplemental Material} for details on the implementation and where to find the publicly available code.} $\chi (X)$ for the previously considered real-world networks (see Tab. \ref{tab:eu-char}). To limit computational expense, we only consider 2-faces of degree 3, i.e. triangles. An interesting investigation extending the present study would be to include quadrangles, pentagons etc. and to analyze how the Euler characteristic changes for different types of networks. Note, however, that filling in large numbers of higher faces of dimension 2 or higher would lead to a simply connected complex, i.e. would represent a degenerate case of the formalism introduced in this article. Observe also that, due to the large number of triangles, the Euler characteristic even of medium-sized networks is huge. Given this fact, and taking into account that only the sign of $\chi$ is truly relevant for our analysis (see discussion below), we compute and tabulate, instead, the {\em mean} Euler characteristic $\overline{\chi} = \lfloor\overline{\chi}/\#T\rfloor$, where $\#T$ denotes the number of triangular faces.

An evaluation of the computed $\chi (X)$ with the results from Tab.\ref{tab:stats} suggests that networks with a high number of high-degree faces have positive Euler characteristics. On the contrary, low numbers of high-degree faces might hint on negative Euler characteristics.


\begin{table}[H]
\begin{center}   
\begin{tabular}{ p{1cm}  p{2cm}  p{2cm} p{2cm}  p{2cm}}
    \hline
    \textbf{data} & \textbf{$\#$ nodes}  & \textbf{$\#$ edges} & \textbf{$\#$ 2-face} & \textbf{$\overline{\chi}(X)$} 
    \\ \hline
     (1) & 34 & 78 & 45 & 1$^*$ \\
    (2) & 62 & 318 & 95 & ${\rm 20 > 0}$ \\
    (3) & 1133 & 12035 & 982 & ${\rm 15 > 0}$ \\
    (4) & 79 & 212 & 130 & ${\rm -1 < 0}$ \\
   \hline 
    \end{tabular}
    \caption[The mean Euler characteristic for real-world networks]{The mean Euler characteristic $\overline{\chi}$ for selected real-world networks. We consider examples for social (\textbf{s}, (a) Zachary's karate club \cite{karate} and (b) social interactions of dolphins \cite{dolphins}), peer-to-peer (\textbf{p}, (d) email exchanges \cite{emails1,emails2}) and biological (\textbf{b}, (c) transcription, E. coli \cite{ecoli}) networks. For the computation we used normalized curvature functions. $^*$: As discussed earlier, our formalism excludes degenerated substructures like isolated edges. The small, but positive $\chi$ in this network is possibly an artifact introduced by the high number of such degenerated substructures resulting from the very small size of the network.}
    \label{tab:eu-char}
    \end{center}
\end{table}


\section{Ricci flow on 2D complexes}
%
%
%
%
%
%

\noindent We have extended the classical notion of the network to higher dimensions by adding $n$-dimensional faces and defined a Ricci curvature on those structures following previous, more general work of Forman \cite{Fo}. Well appointed with these theoretical tools, we can now define associated curvature flows on the networks 'surface'. For this, we build on previous work by E. Bloch and work on the combinatorial flow by Chow and Luo (see \cite{To}).

We assume here that curvature on the ``surface" will remain finite at all times, thus assuring the convergence to a unique connected reference space (\textit{prototype complex} or \textit{prototype network}). This assumption is motivated by results for the flow on smooth surfaces and the already mentioned combinatorial flow. In the 2-dimensional case, we assume that we can achieve this by cutting isolated edges (i.e. faces without parents), i.e. those that are disconnected from the network's ``surface". Note, that this assumption is not necessarily true for higher dimensional cases, e.g. one cannot hope to observe finite curvature in polyhedral 3-complexes, since for the classical analogue -- the (smooth) flow on 3-manifolds -- such ``blow-ups" of infinite curvature are known to occur \cite{To}. Moreover, while there are algorithmic tools for combinatorial singularities arising in the surface flow, the 3-dimensional case is far more complicated and no analogous method has been introduced so far \cite{saucan1}.

\subsection{Long term flow and prototype networks}
As we already suggested in a previous article \cite{WJS2}, the potentially strongest applications of the Forman-Ricci flow to real-world networks is network extrapolation and prediction via the use of the \textit{long term} version of the  Forman-Ricci flow
\begin{equation} \label{eq:RicciFlowNtwks}
\tilde{\gamma} (e)  - \gamma (e) = - {\rm Ric}_F (\gamma (e)) \cdot \gamma (e)\,,%
\end{equation}
where $\tilde{\gamma} (e)$ denotes the new (updated) weighting scheme $\gamma (e)$ with $\gamma (e)$ the initial (given) one. Since the flow above is modeled after the classical one for surfaces, namely
\begin{align}
{\frac{\partial g_{ij}}{\partial t} = - \overline{K} (g_{ij}) \cdot g_{ij}}\,; \\
\overline{K} = \frac{\int_{S^2}KdA}{\int_{S^2}dA}\,; 
\end{align}
where  $\overline{K}$ denotes the mean Gaussian curvature of the given surface $S^2$ and $dA$ the respective area element.
%
%
One would hope that the long term flow would share some essential properties with its classical counterpart, mainly the evolution -- without the formation of singularities -- to a {\it prototype} or {\it reference space}. In the classic case, both smooth surfaces under the classical Ricci flow \cite{Ha}, and the piecewise-flat ones under the discrete flow \cite{CL}, evolve to model surfaces of constant Gaussian curvature. In consequence, each compact surface (smooth or piecewise-flat) admits a {\it background metric} of constant curvature covered by the 2-sphere, the flat torus and the hyperbolic plane. Moreover, the limit surface (and hence its background metric) is determined by the topology of the surface, more precisely by its {\em Euler characteristic}.

However, in order to obtain this behavior without the metric contracting and the surface becoming infinitesimal in the limit, we use the {\it normalized} flow instead. For surfaces it has the following form:
\begin{equation}  \label{eq:NormalizedRicciFlow}
{\frac{\partial g_{ij}}{\partial t} = - \left(K (g_{ij}) - \overline{K}(g_{ij})\right) \cdot g_{ij}}\,;
\end{equation}
%


\noindent In analogy with (\ref{eq:NormalizedRicciFlow}) we define the {\it Normalized Forman-Ricci flow} for networks as 
\begin{eqnarray} \label{eq:NormalizedFormanRicciFlow}
\frac{\partial \gamma(e)}{\partial t} = -\left( \rm{Ric_F} \left( \gamma(e) \right)  - \rm{\overline{Ric}_F}\right) \cdot \gamma(e) \; ,
\end{eqnarray}
where $\rm{\overline{Ric}_F}$ denotes the mean Forman-Ricci curvature.

However, there is no theoretical proof to support the role of $\rm{\overline{Ric}_F}$ in concordance with that of $\overline{K}$. Moreover there is no proper, established notion of Euler characteristics for an abstract, non-planar graph. Even if one could overcome this problem, one is still confronted with the failure of Forman's curvature to satisfy a Gau{\ss}-Bonnet type theorem and hence the fact that there is no way to associate a limit surface (and a background geometry) to a given network. This shortcoming has largely motivated the present work.

Fortunately, the approached adopted here, based on Bloch's extension of Forman's Ricci curvature has two advantages: It first allows for a meaningful \textit{mean Ricci curvature} and, moreover, satisfies a Gau{\ss}-Bonnet Theorem, thus enabling us to define \textit{prototype} (or \textit{reference}) \textit{networks}, as minimal (in the sense of having the minimal number of 2-faces) 2-dimensional polyhedral complexes with the Euler characteristics prescribed by the Gau{\ss}-Bonnet formula (\ref{eq:Bloch-GaussBonnet}). In consequence, one can define a network to be {\em spherical}, {\em Euclidean} or {\em hyperbolic}, if its Euler characteristic is $>, =$ or $<$ 0, respectively. 
\begin{defn}[Prototype networks]
Let X be a 2-dimensional polyhedral complex with Euler characteristic $\chi$ as given by the Gau{\ss}-Bonnet formula. Then we define $\chi$ to be
\begin{enumerate}
\item Spherical, if $\chi >0$ \; ,
\item Euclidean, if $\chi = 0$ \; ,
\item Hyperbolic, if $\chi <0$ \; .
\end{enumerate}
\end{defn}

\noindent While the minimality condition above is simple and intuitive, it seems that proving convergence of the flow using this definition might be somewhat difficult. Therefore, in analogy with the classical (surfaces) case, we suggest that, for the purpose above, as well for computational efficiency, one should rather define the prototype network as having constant curvature, more precisely such that $R_1 = {\rm const.}$. (The choice of $R_1$ as curvature is natural, given it represents an extended Ricci curvature and considering its essential role in defining $\chi$.) While determining if the supposed identity between the two approaches to define prototype networks represents work in progress, let us only mention here that first experimental results seem to suggest that this is, indeed, the case.

\noindent Thus, by using Bloch's results, we can define a consistent notion of background geometry for networks that allows for the study of long term evolution of networks. Furthermore, it enables us to study their (topological) complexity, dispersion of geodesics, recurrence, volume growth, etc. -- in fact all the essential properties captured by its geometric type.

Surely, there are a number of caveats and technical issues that still need to be overcome. Firstly, the formalism is restricted to the 2-dimensional case, i.e. to the addition of 2-faces. This follows from the fact that with transition to dimensions higher or equal 3, the flow would necessarily develop singularities as discussed above. Secondly, for computational investigations one needs to remove edges and nodes of degree one to account for degenerated substructures that are not covered by the class of polyhedral complexes. For practical purposes, one might be restricted to analyzing the largest connected component when imposing a higher dimensional connectivity condition since 1-connected graphs, etc. would render 2-complexes from having topological (cone) singularities. 

A natural question arising in this context is how the above defined flow compares to similar geometric flows, starting with the simplest -- the combinatorial flow.

For the transition from the Forman-Ricci flow, defined on the edges, to a flow on the 2-dimensional complex we used the fact that the faces are just Euclidean (triangles)\footnote{In fact, this can be done, {\it mutatis mutandis} for, say, spherical triangles as well.}, thus allowing for an extension from edges to faces by considering simple -- and standard -- barycentric combinations.

\subsection{Ricci flow and Laplacian flow}
In the previous section, we mentioned two definitions for the Ricci flow on the surface of 2-dimensional complexes, namely the \textit{short term} (Eq. (\ref{eq:RicciFlowNtwks})) and the \textit{long term Ricci flow} (Eq. (\ref{eq:NormalizedFormanRicciFlow})). Since we want to study the shape of networks in an evolutionary sense, i.e. in terms of the above introduced prototype complexes, we consider the normalized long term Ricci flow. In the case of 2-dimensional polyhedral complexes, this gives with
\begin{align}
{\rm Ric_F}(e)=R_1 (e) \; ; \\
\overline{{\rm Ric_F}} = \frac{1}{\# F_1} \sum_e R_1 (e) \; ,
\end{align}
the following form:
\begin{defn}[Ricci-flow for polyhedral complexes]
Let $X$ be a regular 2-dimensional, polyhedral complex. Then the Ricci-flow on the surface can be defined in terms of the edges as
\begin{eqnarray}
\frac{\partial \gamma (e)}{\partial t} = \left(  R_1 (e) - \frac{1}{\# F_1} \sum_{e' \in F_1} R_1(e') \right) \gamma(e) \; .
\end{eqnarray}
\end{defn}
\noindent Here we consider the \textit{normalized} long term flow scaled with the mean Ricci curvature $\overline{{\rm Ric_F}}$, a global network property that is strongly related to the Euler-characteristic, as we have seen in the previous section.

Combining the Bochner-Weizenb{\"o}ck formula (Eq. \ref{eq:BW}) and the introduced Ricci-flow, we can define a Laplacian flow $\Delta_F ^2$ for 2-dimensional polyhedral complexes. We again only consider the combinatorial, i.e. unweighted, case and set ${\rm Ric_F}=R_1$:
\begin{defn}[Rough Laplacian for polyhedral complexes]
Let $X$ be a regular 2-dimensional, polyhedral complex. Then the {\em rough Laplacian} on the surface can be defined in terms of the edges as (with Eq. (\ref{eq:bochner-2d}) and Eq. (\ref{eq:curv-func})):
\begin{eqnarray}
\Delta_F ^2 = \sum_{f_n^2 >e} n - 5 \cdot \# \lbrace	 f_n^2 > e \rbrace + \sum_{v < e} {\rm deg}(v) - 2 \; .
\end{eqnarray}
\end{defn}

\subsection{Gauge networks}

As previously discussed in this section, we expect the Ricci-flow to drive networks to structural limiting cases that we term \textit{prototype networks}, characterized by the sign of their Euler characteristic. Fig. \ref{fig:gauge} shows the evolution of two real-world examples \cite{karate, dolphins} with the discrete Ricci-flow acting on edge weights. We observe the evolution of the \textit{hyperbolic} case and the \textit{spherical} case as defined above. ()The degenerated \textit{Euclidean} case with $\chi=0$ is a rare limiting case that may only accidentally occur in real-world examples, i.e. it is primarily of theoretical interest for systematic reasons.) While a formal, theoretical proof of convergence to such limit cases eludes us at this point in time, the tentative experiments so far indicate that, indeed, that networks with $\overline{{\rm Ric_F}} > 0$ tend to evolve to a more ``round'', spherical shape, whereas tose with $\overline{{\rm Ric_F}} < 0$ evolve to more tree-like, hence hyperbolic structures.


Our phenomenological exploration of the occurrence of higher dimensional faces in a small set of real-world networks suggested two distinct types of networks: Those that have a higher number of 2-faces for higher degrees and others with a very low number of detected 2-faces of higher degrees. A comparison of this observation and the computed Euler characteristics suggests that the two types represent the two main classes of prototype networks that occur in real-world networks: spherical and hyperbolic. \emph{Spherical} networks seem to be characterized by a high number of high degree faces. They occur in densely connected substructures that possibly govern the evolution of the spherical type as illustrated in Fig. \ref{fig:gauge}B. On the contrary, our \emph{hyperbolic} examples show a rapidly decreasing number of detected faces when considering higher degree faces. This lack of higher degree faces seems to be linked to the rather sparse community structure that eventually evolves to a hyperbolic prototype network (illustrated in Fig. \ref{fig:gauge}B).

\begin{figure}[H]
	\centering
	\captionsetup{width=0.9\linewidth}
	\includegraphics[width=0.75\linewidth]{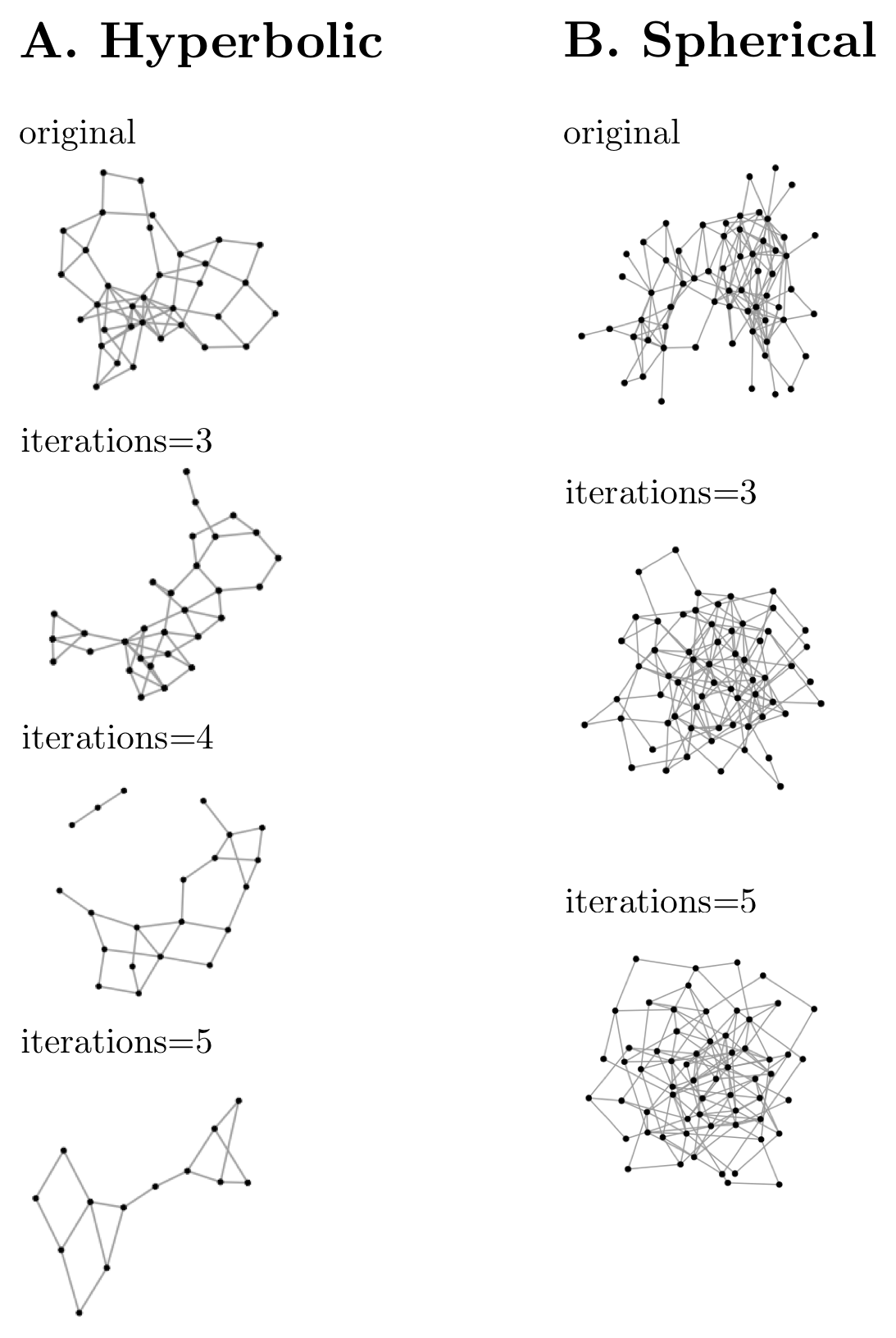} 
	\caption[Prototype networks]{Prototype networks as limiting cases under the Ricci flow acting on edge weights. Edges with normalized weights below a threshold $0.05$ are considered to vanish. \textbf{A:} Hyperbolic case. (network data: \cite{karate}). \textbf{B:} Spherical case. (network data: \cite{dolphins}).}
	\label{fig:gauge}
\end{figure}
\newpage

\section{Implications for real-world networks}

\noindent After considering a number of abstract concepts regarding the shape of networks, we now want to put our results in context with practical applications for real-world networks. 

\subsection{Higher degree faces and the ``backbone" effect}
While ``filling in" higher degree faces marks, on an abstract level, the transition between the classic graphs in one dimension and the higher-dimensional polyhedral complexes, there exists a practical meaning of this transition for real-world networks. One important class of real-world networks are correlation networks, where edges represent correlations between objects inferred from empirical data. The edge weights reflect the strength of the respective correlation. Such networks are widely used in the biological sciences. For instance, co-expression networks represent correlations in expression levels of genes and brain networks are typically inferred from correlations in the activity profiles of brain regions.  

A \textit{2-face} of degree $n$ then represents a sequence of $n$ correlated elements of the system (represented by nodes), e. g. for $n=3$ we have elements $A$, $B$ and $C$ where $A$ is correlated with $B$, $B$ is correlated with $C$ and $C$ is correlated with $A$. By ``filling in" the respective triangle $f_3^2=\lbrace A, B, C \rbrace$ we contract the three elements -- curvature-wise -- to one. An analogous observation can be made for higher degree \textit{2-faces} (see Lemma \ref{lem:ric-face}) and $n$-faces in general, as well as more general for other classes of real-world networks.

The contraction itself is a meaningful reduction for most classes of real-world networks and can be understood as representing the network at a higher level of abstraction: The elements (nodes) jointly represented by a $n$-face share commonalities and can be seen as a group or cluster with respect to this commonality. For example, in a social network, a $n$-simplex could represent a group of friends, co-workers, classmates or collaborators. Instead of representing every individual in the network, we compress them in groups; simplifying the computation of network properties, such as the Ricci curvature, by large scales. This transition to a higher level of abstraction is an intrinsic property of the higher degree Ricci curvature that we introduced in this article. In a previous article \cite{WJS2}, we discussed this very aspect in the 1-dimensional case as the \textit{backbone-effect} of the Ricci curvature, i.e. its ability to highlight essential topological and structural information.

\subsection{Relation to dispersion}
The observation of 2-faces in networks and the importance of this structural information have been made previously. 
We discuss a related approach from Social Network Science, namely the \textit{dispersion} \cite{BK}.

\textit{Dispersion} has been introduced in the context of Social Networks as a characteristic for structural information underlying weak and strong ties. We speak of \textit{weak ties}, in the case of edges that connect nodes that share a low number of common neighbors, i.e. nodes with path distance 1 to both original nodes. In contrast, \textit{strong ties} connect nodes with a high number of common neighbors. The dispersion is then given by
\begin{eqnarray}
{\rm disp} (u,v) = \sum_{i, j \in C_{uv} \setminus \{u,v\}} d_v(i,j) \; ,
\label{eq:disp}
\end{eqnarray}
where 
$C_{uv}$ represents the set of common neighbors of $u$ and $v$, and $d_v$ the distance on $C_{uv}$ that has been shown to perform best \cite{BK}, namely
\begin{eqnarray}
d_v (i,j)=\begin{cases}	1, & i \nsim j \; \mbox{and} \; C_{ij} = \varnothing \\ 0, & {\rm else}\end{cases} \;.
\end{eqnarray}
The edges that contribute to dispersion are parallels of $e=e(u,v)$, i.e. also contribute to the Ricci curvature of $e$. Therefore, the contraction that occurs curvature-wise when filling in faces also reduces dispersion. Closely related to the combinatorial (unweighted) Ricci curvature, the dispersion is an edge-based network property that -- while also dependent on node degrees -- strongly characterizes edge-based information.

\begin{figure}[H]
	\centering
	\captionsetup{width=0.9\linewidth}
	\includegraphics[width=\linewidth]{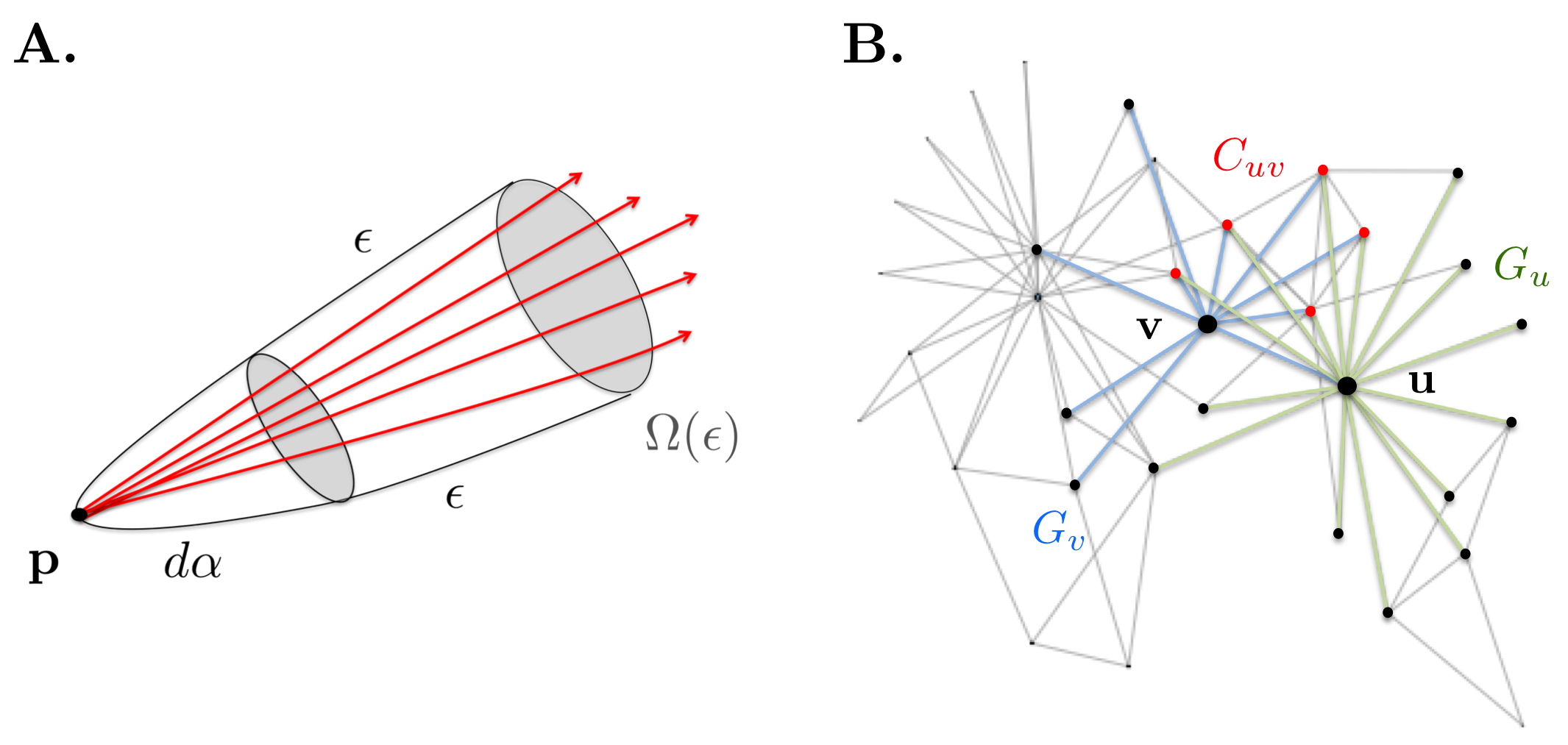} 
	\caption[Dispersion in networks]{Dispersion in networks. \textbf{A:} Dispersion of geodesics in the classic Riemannian case. \textbf{B:} Dispersion between two associated nodes in a social network (Zarachy's karate club, see \cite{karate}). The illustrated dispersion can be computed as ${\rm disp} (u,v)=5$ using Eq. (\ref{eq:disp}).}
	\label{fig:disp}
\end{figure}

\noindent To understand the close relation between the two measures, we go back to the analogy in classic Riemannian geometry. In a recent paper \cite{WJS2}, we discussed and introduced Forman-Ricci curvature as a measure of geodesic dispersal (see Fig. \ref{fig:disp}A). The dispersion Eq. (\ref{eq:disp}) gives a closely related network-theoretic counterpart to this idea. However, while closely related, we note that Eq. (\ref{eq:disp}) is essentially a measure on triangles since it represents the number of triangles that feature a given edge $e$ as a side. Therefore, in our classic analogy, this notions measures the volume (area) growth in the direction of the edge $e$, rather than the dispersion of geodesics. In consequence, dispersion is even more closely related to Ollivier's discretization of Ricci curvature \cite{Ol1}, \cite{Ol2} than to Forman's. It is furthermore related to Stone's Ricci curvature for $PL$ manifolds \cite{Stone}. 

\section{Discussion}

\noindent In this article, we introduced a higher-dimensional representation of the classic 1-dimensional graph model for networks that allows for analysis of ideal network shapes. By 'filling in' higher degree faces in sets of associated edges, we constructed 2-dimensional polyhedral complexes that allow for mapping of the discrete Ricci-formalism we introduced in \cite{WJS} to higher dimensions. In the following analysis, we mainly focused on filling in $n$-dimensional simplices.

Building on recent work by E. Bloch \cite{Bloch}, we formulated a network-theoretic Gau{\ss}-Bonnet Theorem that allows for the definition of an Euler characteristic on networks. Furthermore, we introduced a first \textit{prototype networks}. Our results illustrate the \textit{backbone-effect} of the Ricci-flow, i.e. its ability to provide an abstract of the topological and structural information of a network. It gives rise to a natural classification scheme for dynamically evolving networks based on topological properties. In contrast to previous approaches and model networks with an emphasis on node-based characteristics, the Ricci curvature and in consequence the Ricci-flow are edge-based. Defined by the information flow within the network, quantitatively described by the edge weights, both characteristics can be defined independently of node-based information overcoming the widely discussed node-degree bias present in most network-analytic tools. Furthermore, its focus on the information flow within a network makes the Ricci-formalism an ideally suited tool for the analysis of dynamics in networks.

While our study demonstrated promising results for the special case of 2-dimensional simplicial complexes, the narrow range of cases we covered can only be considered preliminary results. The emphasis of this article was to introduce theoretical tools and demonstrate their capabilities on a small set of use cases. Possible future directions include a broader study of 2-dimensional polyhedral and CW cell complexes in an attempt to generalize the prototype networks introduced in this article. In particular, one would like to give a rigorous proof (perhaps by comparing the Forman-Ricci flow introduced here to that developed by Chow and Luo) confirming that, indeed, the sign of $\overline{{\rm Ric_F}}$, determines the shape of the limiting prototype network.

A natural question in this context is, how -- if at all -- the presented formalism maps to 3-complexes. As discussed earlier, there are -- so far -- no general results or algorithmic tools to deal with singularities that are known to occur in 3-complexes. It remains an open question, if there exists a network-theoretic counterpart to G. Perelman's surgery method for the Riemannian case that might solve the singularity issue for the discrete case. \\

The present article introduced novel prototype networks in a first attempt to classify networks by counterparts of common tools from Differential Geometry: The Gau{\ss}-Bonnet theorem and Euler characteristics. We showed that one can seemingly predict the limiting case of a network's evolution based solely on its Ricci curvature. Linking back to our initial question, we \textit{can see the shape of a network} - by only evaluating its Ricci curvature.

\section{Acknowledgements}
ES thanks Moses Boudourides for bringing the notion of dispersion to his attention, and Eli Appleboim for common speculations along the lines of the title of this paper. Furthermore, he would like to thank the Max Planck Institute for Mathematics in the Sciences, Leipzig, for its support and warm hospitality. MW was supported by a scholarship of the Konrad Adenauer Foundation.

\section*{Supplemental Material}
\noindent \textbf{Supplement:} Computational implementations

\bibliographystyle{atlasnote}
\bibliography{quellen}
\end{document}